\def\qed{\hfill\ifhmode\unskip\nobreak\fi\quad\ifmmode\Box\else\hfill$\Box$\fi}
\def\ite#1{\hfill\break${}$\hbox to 50pt {\quad(#1)\hfill}}
\newtheorem{thm}{Theorem}
\newtheorem{lem}[thm]{Lemma}
\begin{document}

\title{\vspace{-0.5in}A stability version for a theorem of Erd\H{o}s on nonhamiltonian graphs}

\author{
{{Zolt\'an F\" uredi}}\thanks{
\footnotesize {Alfr\' ed R\' enyi Institute of Mathematics, Hungary
E-mail:  \texttt{furedi.zoltan@renyi.mta.hu}. 
Research supported in part by the Hungarian National Science Foundation OTKA 104343,
 by the Simons Foundation Collaboration Grant 317487,
and by the European Research Council Advanced Investigators Grant 267195.
}}
\and
{{Alexandr Kostochka}}\thanks{
\footnotesize {University of Illinois at Urbana--Champaign, Urbana, IL 61801
 and Sobolev Institute of Mathematics, Novosibirsk 630090, Russia. E-mail: \texttt {kostochk@math.uiuc.edu}.
 Research of this author
is supported in part by NSF grants  DMS-1266016 and DMS-1600592
and grants 15-01-05867 and 16-01-00499  of the Russian Foundation for Basic Research.
}}
\and{{Ruth Luo}}\thanks{University of Illinois at Urbana--Champaign, Urbana, IL 61801, USA. E-mail: {\tt ruthluo2@illinois.edu}.}}

\date{August 23, 2016}

\maketitle

\vspace{-0.3in}

\begin{abstract} Let $n, d$ be integers with $1 \leq d \leq \left \lfloor \frac{n-1}{2} \right \rfloor$, and set $h(n,d):={n-d \choose 2} + d^2$ and 
  $e(n,d):= \max\{h(n,d),h(n, \left \lfloor \frac{n-1}{2} \right \rfloor)\}$.
Because $h(n,d)$ is quadratic in $d$, there exists a $d_0(n)=(n/6)+O(1)$ such that
\begin{equation*}
   e(n,1)> e(n, 2)> \dots >e(n,d_0)=e(n, d_0+1)=\dots = e(n,\left \lfloor \frac{n-1}{2} \right \rfloor). 
  \end{equation*}
A theorem by Erd\H{o}s states that for $d\leq \left \lfloor \frac{n-1}{2} \right \rfloor$, any  $n$-vertex
 nonhamiltonian graph $G$ with minimum degree $\delta(G) \geq d$ has at most
 $e(n,d)$ edges, and for 
 $d \geq d_0(n)$ the unique sharpness example is simply the  graph $K_n-E(K_{\lceil (n+1)/2\rceil})$.
Erd\H{o}s also presented a sharpness example $H_{n,d}$  for each $1\leq d \leq d_0(n)$.

We show that if $d< d_0(n)$ and a $2$-connected, nonhamiltonian $n$-vertex graph $G$ with  $\delta(G) \geq d$ has more than
$e(n,d+1)$ edges, then $G$ is a subgraph of $H_{n,d}$.
Note that $e(n,d) - e(n, d+1) = n - 3d - 2 \geq n/2$ whenever $d< d_0(n)-1$.

\medskip\noindent
{\bf{Mathematics Subject Classification:}} 05C35, 05C38.\\
{\bf{Keywords:}} Tur\' an problem, hamiltonian cycles, extremal graph theory.
\end{abstract}

\begin{flushright}
 Dedicated to the memory of Professor H. Sachs.
\end{flushright}

\section{Introduction}
We use standard notation. In particular, $V(G)$ denotes the vertex set of a graph $G$, $E(G)$ denotes
the edge set of $G$, and $e(G)=|E(G)|$. Also, if $v\in V(G)$, then $N(v)$ denotes the neighborhood of $v$ and
$d(v)=|N(v)|$.
  Ore~\cite{Ore} proved the following
Tur\' an-type result:

\begin{thm}[Ore~\cite{Ore}]\label{Ore}
If $G$ is a nonhamiltonian graph on $n$ vertices, then $e(G) \leq {n-1 \choose 2} + 1$.
\end{thm}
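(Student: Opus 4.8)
The plan is to prove the bound by analyzing an \emph{edge-maximal} counterexample. First I would reduce to that case: among all nonhamiltonian graphs on $n$ vertices, choose one, call it $G$, with the maximum number of edges. It has at least as many edges as any given nonhamiltonian graph, and adding any missing edge to it must create a hamiltonian graph (otherwise it would not have the most edges). The inequality is immediate for $n\le 2$, and for $n\ge 3$ this extremal $G$ is not $K_n$ (which is hamiltonian), so $G$ has at least one nonadjacent pair of vertices.

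The heart of the argument is the degree-sum estimate that \emph{for any two nonadjacent vertices $u,v$ of $G$, one has $d(u)+d(v)\le n-1$}. Since $G+uv$ is hamiltonian with a cycle through $uv$, the graph $G$ contains a hamiltonian $u$--$v$ path $u=x_1,x_2,\ldots,x_n=v$. I would then run the classical rotation (crossing-edge) argument: if for some index $i$ we had both $x_i\in N(u)$ and $x_{i-1}\in N(v)$, then $x_1x_2\cdots x_{i-1}x_nx_{n-1}\cdots x_i x_1$ would be a hamiltonian cycle of $G$, a contradiction. Hence the index set $\{i:x_i\in N(u)\}$ and the shifted set $\{i:x_{i-1}\in N(v)\}$ are disjoint subsets of $\{2,\ldots,n\}$; since the former has size $d(u)$ and the latter size $d(v)$, and $\{2,\ldots,n\}$ has $n-1$ elements, this forces $d(u)+d(v)\le n-1$.

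With this lemma in hand the bound follows by a short count. I would fix any nonadjacent pair $u,v$ and partition $E(G)$ into the edges incident to $\{u,v\}$ and the edges lying inside $G-u-v$. Because $uv\notin E(G)$, the first group has exactly $d(u)+d(v)\le n-1$ edges, while the second group lives on $n-2$ vertices and hence has at most $\binom{n-2}{2}$ edges. Summing and using $(n-1)+\binom{n-2}{2}=\binom{n-1}{2}+1$ gives $e(G)\le \binom{n-1}{2}+1$, as required.

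I expect the main obstacle to be the degree-sum lemma rather than the final count. The delicate points are setting up the hamiltonian $u$--$v$ path from edge-maximality, verifying that the rerouted walk is a genuine spanning cycle, and checking that the neighborhood of $u$ and the shifted neighborhood of $v$ both embed into the single $(n-1)$-element index set $\{2,\ldots,n\}$, so that their disjointness yields precisely $n-1$. Once that is secured, the arithmetic identity $(n-1)+\binom{n-2}{2}=\binom{n-1}{2}+1$ is routine.
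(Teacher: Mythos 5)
Your proof is correct and takes essentially the same route as Ore's original argument, which the paper does not reproduce (Theorem~\ref{Ore} is cited from~\cite{Ore}): pass to an edge-maximal, i.e.\ saturated, nonhamiltonian graph, derive $d(u)+d(v)\le n-1$ for a nonadjacent pair via the rotation along a hamiltonian $u$--$v$ path — precisely the fact the paper records as~\eqref{D1} — and then count at most $n-1$ edges meeting $\{u,v\}$ plus at most $\binom{n-2}{2}$ edges inside $G-u-v$. All the delicate points you flagged check out, including the disjointness of the two index sets inside $\{2,\dots,n\}$ and the identity $(n-1)+\binom{n-2}{2}=\binom{n-1}{2}+1$.
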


This bound is achieved only for the $n$-vertex graph obtained from the complete graph $K_{n-1}$ by adding a vertex of degree $1$.
Erd\H{o}s ~\cite{Erdos}  refined the bound in terms of the minimum degree of the graph:

\begin{thm}[Erd\H{o}s~\cite{Erdos}]\label{Erdos} Let $n, d$ be integers with $1 \leq d \leq \left \lfloor \frac{n-1}{2} \right \rfloor$, and set $h(n,d):={n-d \choose 2} + d^2$.
If $G$ is a nonhamiltonian graph on $n$ vertices with minimum degree $\delta(G) \geq d$, then
     \[e(G) \leq \max\left\{ h(n,d),h(n, \left \lfloor \frac{n-1}{2} \right \rfloor)\right\}=:e(n,d).\]
This bound is sharp for all $1\leq d\leq \left \lfloor \frac{n-1}{2} \right \rfloor$.
\end{thm}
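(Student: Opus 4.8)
The plan has two independent parts: exhibiting extremal graphs (sharpness) and proving the upper bound, and I would treat them separately.

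For sharpness I would, for each admissible $d$, use the graph $H_{n,d}$ on vertex set $A\cup B\cup C$ with $|A|=d$, $|B|=n-2d$, $|C|=d$, in which $A$ is joined completely to $B\cup C$, $A\cup B$ is a clique, $C$ is independent, and there are no $B$--$C$ edges. Counting via $\binom{a}{2}+\binom{b}{2}+ab=\binom{a+b}{2}$ with $a=d$, $b=n-2d$ gives $e(H_{n,d})=\binom{n-d}{2}+d^2=h(n,d)$. The vertices of $C$ have degree exactly $d$ and those of $B$ have degree $n-d-1\ge d$, so $\delta(H_{n,d})=d$; moreover deleting the $d$-element set $A$ leaves $d+1$ components (the clique $B$ together with the $d$ isolated vertices of $C$), so, since a hamiltonian graph minus any $S$ has at most $|S|$ components, $H_{n,d}$ is nonhamiltonian. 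Taking $d=\lfloor(n-1)/2\rfloor$ recovers $K_n-E(K_{\lceil(n+1)/2\rceil})$, which realizes $h(n,\lfloor(n-1)/2\rfloor)$ and has minimum degree $\lfloor(n-1)/2\rfloor\ge d$, so it lies in the class for every admissible $d$. Hence whichever of $h(n,d)$ and $h(n,\lfloor(n-1)/2\rfloor)$ is larger is attained, which is exactly $e(n,d)$.

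For the upper bound, let $G$ be nonhamiltonian with degree sequence $d_1\le\cdots\le d_n$ and $d_1\ge\delta(G)\ge d$. The engine I would invoke is Chv\'atal's degree-sequence condition for Hamiltonicity: since $G$ is nonhamiltonian its sequence cannot satisfy the sufficient condition, so there is an integer $k$ with $1\le k<n/2$, $d_k\le k$, and $d_{n-k}\le n-k-1$. From $d_k\ge d_1\ge d$ we get $k\ge d_k\ge d$, hence $d\le k\le\lfloor(n-1)/2\rfloor$. I would then bound the degree sum in three blocks: $\sum_{j\le k}d_j\le k^2$ (each $d_j\le d_k\le k$); $\sum_{k<j\le n-k}d_j\le(n-2k)(n-k-1)$ (each such $d_j\le d_{n-k}\le n-k-1$); and $\sum_{j>n-k}d_j\le k(n-1)$ (each of the $k$ terms is at most $n-1$). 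A short computation shows $k^2+(n-2k)(n-k-1)+k(n-1)=(n-k)(n-k-1)+2k^2=2h(n,k)$, so $e(G)\le h(n,k)$.

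It remains to control $h(n,k)$ over the integers $d\le k\le\lfloor(n-1)/2\rfloor$. Since $h(n,k)=\binom{n-k}{2}+k^2$ is convex in $k$ (its second difference equals $3$), its maximum on an interval is attained at an endpoint, giving $h(n,k)\le\max\{h(n,d),h(n,\lfloor(n-1)/2\rfloor)\}=e(n,d)$ and completing the proof. The only genuinely nontrivial input is the Hamiltonicity criterion that extracts the index $k$; the rest is a degree-counting and convexity calculation. If one wished to avoid quoting Chv\'atal's theorem, I would instead run a P\'osa-type rotation argument on a longest path to manufacture the same witness index $k$, and that self-contained extraction is the step I expect to be the main technical obstacle.
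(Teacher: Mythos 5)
Your proposal is essentially correct, but it is worth noting that the paper never actually proves this statement: Theorem~\ref{Erdos} is quoted from Erd\H{o}s's 1962 paper, and the text only supplies the sharpness construction $H_{n,d}$. Your sharpness part coincides with that: your $A\cup B\cup C$ graph is exactly $H_{n,d}$, and the count of components after deleting $A$ is the standard certificate of nonhamiltonicity. For the upper bound you invoke Chv\'atal's degree-sequence theorem to extract $k$ with $d\le k\le\left\lfloor\frac{n-1}{2}\right\rfloor$, $d_k\le k$, $d_{n-k}\le n-k-1$; the three-block degree-sum estimate, the identity $k^2+(n-2k)(n-k-1)+k(n-1)=2h(n,k)$, and the convexity finish (second difference equal to $3$, so the maximum over $d\le k\le\lfloor(n-1)/2\rfloor$ is at an endpoint) are all correct. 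This is a legitimate, compact proof, but a genuinely different route from the one the paper's own machinery suggests: Lemma~\ref{g-d} shows that a saturated counterexample with more than $h(n,\lfloor\frac{n-1}{2}\rfloor)$ edges has a set $D$ of $k\ge\delta(G)\ge d$ vertices of degree at most $k$ with $G-D$ complete, which gives $e(G)\le h(n,k)$ and then the same convexity step; that argument uses only P\'osa's theorem (Theorem~\ref{Posa}) and Ore's inequality~\eqref{D1}, i.e., tools contemporary with Erd\H{o}s, and it yields the structural conclusion ($G-D$ complete) that drives the paper's stability theorem. Your Chv\'atal shortcut buys brevity by black-boxing the rotation arguments, but it gives no structure; the P\'osa-type fallback you mention at the end is precisely the direction the paper's lemmas formalize.

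One slip, easily repaired from your own construction: for even $n$ the graph $H_{n,\lfloor(n-1)/2\rfloor}$ is \emph{not} $K_n-E(K_{\lceil(n+1)/2\rceil})$, and the latter does not realize $h(n,\lfloor\frac{n-1}{2}\rfloor)$. For $n=10$ one has $\lceil(n+1)/2\rceil=6$, and $K_{10}-E(K_6)$ has $45-15=30$ edges, while $h(10,4)=\binom{6}{2}+16=31$; the discrepancy arises because in $H_{10,4}$ the set $B\cup C$ spans one edge (inside $B$, which has two vertices), so the graph is not a clique joined to an independent set. The fix is immediate: to attain the second term of the maximum, simply use your own family with parameter $d'=\left\lfloor\frac{n-1}{2}\right\rfloor$; the graph $H_{n,d'}$ is nonhamiltonian, has minimum degree $d'\ge d$, and has exactly $h(n,\lfloor\frac{n-1}{2}\rfloor)$ edges, so both terms of $e(n,d)$ are attained and sharpness holds as claimed.
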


To show the sharpness of the bound, for $n,d \in \mathbb N$ with $d \leq \left \lfloor \frac{n-1}{2} \right \rfloor$,
consider the graph $H_{n,d}$  obtained from a copy of $K_{n-d}$, say with vertex set $A$,
 by adding $d$ vertices of degree $d$ each
of which is adjacent to the same $d$ vertices in $A$.
An example of $H_{11,3}$ is below.
\begin{figure}[!ht]
  \centering
    \includegraphics[width=0.25\textwidth]{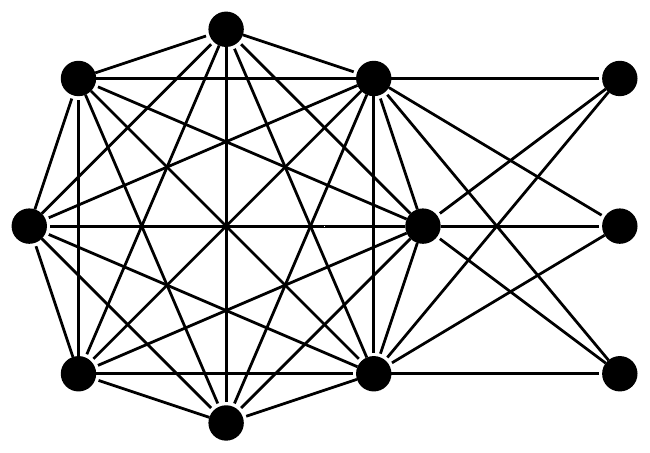} 
  \caption{$H_{11,3}$}
\end{figure}

By construction, $H_{n,d}$ has minimum degree $d$, is nonhamiltonian, and $e(H_{n,d}) = {n-d \choose 2} + d^2 = h(n,d)$.
Elementary calculation shows that $h(n,d)> h(n,\left \lfloor \frac{n-1}{2} \right \rfloor)$ in the range 
  $1\leq d\leq \left \lfloor \frac{n-1}{2} \right \rfloor$ if and only if 
  $d<(n+1)/6$ and $n$ is odd or  $d<(n+4)/6$ and $n$ is even.
Hence there exists a $d_0:=d_0(n)$ such that
\begin{equation*}
   e(n,1)> e(n, 2)> \dots >e(n,d_0)=e(n, d_0+1)=\dots = e(n,\left \lfloor \frac{n-1}{2} \right \rfloor),
  \end{equation*}
  where $d_0(n):= \left \lceil \frac{n+1}{6} \right \rceil$ if $n$ is odd, and 
  $d_0(n):= \left \lceil \frac{n+4}{6} \right \rceil$ if $n$ is even. 
Let $H'_{n,d}$ denote the graph that is an edge-disjoint union of two complete graphs
 $K_{n-d}$ and $K_{d+1}$ sharing one vertex. 

The result of this note is the following refinement of Theorem~\ref{Erdos}.

\begin{thm}\label{ma}
Let $n\geq 3$ and $d\leq \left \lfloor \frac{n-1}{2} \right \rfloor$. 
Suppose that $G$ is an $n$-vertex  nonhamiltonian graph  with minimum degree $\delta(G) \geq d$ such that 
\begin{equation}\label{equ2}
    e(G) > e(n,d+1)= \max\left\{ h(n,d+1), h(n, \left\lfloor \frac{n-1}{2}\right\rfloor)\right\}.
\end{equation}
(So we have $d< d_0(n)$.)
Then $G$ is a subgraph of either $H_{n,d}$ or $H'_{n,d}$. 
\end{thm}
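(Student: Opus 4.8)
The plan is to normalize the minimum degree, split on the connectivity of $G$, and reduce the $2$-connected case to a stability analysis of Kopylov's theorem.

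\textbf{Normalizing the minimum degree.} First I would argue that $\delta(G)=d$. If instead $\delta(G)\ge d+1$, then since $d+1\le\lfloor(n-1)/2\rfloor$ we may apply Theorem~\ref{Erdos} with minimum degree $d+1$ to get $e(G)\le e(n,d+1)$, contradicting~\eqref{equ2}. So $\delta(G)=d$, and we may fix a vertex $v$ with $d(v)=d$. I would also record the convexity fact that drives everything: as a function of $r$, $h(n,r)=\binom{n-r}{2}+r^2$ is an upward quadratic, so for every integer $r$ with $d+1\le r\le\lfloor(n-1)/2\rfloor$ one has $h(n,r)\le\max\{h(n,d+1),h(n,\lfloor(n-1)/2\rfloor)\}=e(n,d+1)$. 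Thus any graph embedding into some $H_{n,r}$ with $r\ge d+1$ automatically has at most $e(n,d+1)$ edges; \eqref{equ2} will let me discard all such $r$ and isolate the value $r=d$.

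\textbf{The non-$2$-connected case.} Suppose $G$ is not $2$-connected. If $G$ is disconnected, then $\delta(G)\ge d$ forces every component to have at least $d+1$ vertices, and $G$ is contained in a disjoint union of cliques; by convexity of $\binom{\cdot}{2}$ the edge count is maximized by two cliques of sizes $d+1$ and $n-d-1$, giving at most $\binom{n-d-1}{2}+\binom{d+1}{2}<h(n,d+1)\le e(n,d+1)$, so \eqref{equ2} fails. Hence $G$ is connected with a cut vertex $w$. Now every non-$w$ vertex of a component $C_i$ of $G-w$ has all of its neighbors inside $C_i\cup\{w\}$, so $|C_i|\ge d$, and $G$ is contained in the union of the cliques on the sets $C_i\cup\{w\}$. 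Again by convexity the edge count is maximized by two cliques sharing $w$ of sizes $d+1$ and $n-d$; every other choice gives at most $\binom{n-d-1}{2}+\binom{d+2}{2}\le h(n,d+1)\le e(n,d+1)$ and is excluded by \eqref{equ2}. Therefore the only surviving configuration is $G\subseteq H'_{n,d}$. (For $d$ not small this case is simply vacuous, since then $e(H'_{n,d})\le e(n,d+1)$; this causes no harm.)

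\textbf{The $2$-connected case.} Now assume $G$ is $2$-connected; here I aim to show $G\subseteq H_{n,d}$. The tool is the stability form of Kopylov's theorem: a $2$-connected nonhamiltonian $n$-vertex graph has at most $\max\{h(n,2),h(n,\lfloor(n-1)/2\rfloor)\}$ edges, and I would prove that one with more than $e(n,d+1)$ edges must embed into the extremal family $\{H_{n,r}\}$. Concretely I would run Kopylov's disintegration, repeatedly deleting vertices of current degree at most $d$ and letting $G'$ be the resulting core; since each deleted vertex carries at most $d$ edges, $e(G)\le e(G')+d\,(n-|V(G')|)$, and the degree-$d$ vertex $v$ makes the process nonempty. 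The substance is to show, using $2$-connectivity together with non-hamiltonicity via a longest-cycle and P\'osa-rotation argument, that the core $G'$ is a complete graph and that all deleted vertices attach to one common subset $B$ of $G'$. Feeding in $e(G)>e(n,d+1)$, together with the convexity remark (which forces the parameter to be $d$ rather than any larger $r$, since $\delta(G)=d$), then pins down $G'=K_{n-d}$ with every deleted vertex joined only into a fixed $d$-set $B\subseteq V(G')$ of size $d$. That is exactly the structure of $H_{n,d}$, so $G\subseteq H_{n,d}$.

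\textbf{The main obstacle.} The crux lies entirely in the $2$-connected case: proving the stability form of Kopylov's theorem, namely that a near-extremal $2$-connected nonhamiltonian graph really embeds into the extremal graph. The delicate points are showing that the disintegration core is \emph{complete} (not merely dense) and that the low-degree vertices attach to a \emph{single} $d$-set rather than being spread out; this is where the longest-cycle rotation argument does the real work. By contrast, the edge threshold $e(n,d+1)$ and the convexity of $h(n,\cdot)$ are exactly calibrated to discard the competing graphs $H_{n,r}$ with $r\ge d+1$, so once the structural step is in hand the parameter is forced to equal $d$ and the proof closes.
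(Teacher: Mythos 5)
Your preliminary reductions are sound: forcing $\delta(G)=d$ via Theorem~\ref{Erdos}, the disconnected case, and the cut-vertex case (where convexity of $\binom{\cdot}{2}$ over the possible block sizes leaves $G\subseteq H'_{n,d}$ as the only configuration compatible with~\eqref{equ2}) are all correct, if routine. But the proposal has a genuine gap exactly where you yourself locate ``the crux'': in the $2$-connected case you never prove the statement you need. You invoke a ``stability form of Kopylov's theorem'' --- that a $2$-connected nonhamiltonian $n$-vertex graph with more than $e(n,d+1)$ edges embeds into some $H_{n,r}$ --- and then describe what its proof ought to establish (that the disintegration core $G'$ is complete, and that all deleted vertices attach to a single common $d$-set), but you give no argument for either assertion. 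The bookkeeping you do supply, namely $e(G)\le e(G')+d\,(n-|V(G')|)$ and the convexity of $h(n,\cdot)$, only calibrates edge counts; it cannot force the core to be complete rather than merely dense, nor force the low-degree vertices to share one neighborhood --- and those two structural facts are the entire content of the theorem. Moreover, a stability version of Kopylov's theorem is at least as hard as Theorem~\ref{ma} itself (it is the subject of separate and substantially longer work), so reducing to it as a black box is circular in difficulty. As written, the proposal settles only the easy cases and restates the hard one as a goal.

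For comparison, the paper closes this gap with no connectivity split and no disintegration. It passes to a saturated supergraph, which yields Ore's condition~\eqref{D1}: $d(u)+d(v)\le n-1$ for every non-edge $uv$. P\'osa's Theorem~\ref{Posa} provides a set $D$ of $k\le\left\lfloor\frac{n-1}{2}\right\rfloor$ vertices of degree at most $k$, and Lemma~\ref{g-d} shows --- by taking $k$ maximal and playing a maximum-degree non-edge of $G-D$ against~\eqref{D1} --- that $G-D$ is a complete graph; this one-paragraph argument is precisely the ``core is complete'' step your sketch is missing. The hypothesis~\eqref{equ2} then forces $k=d$ exactly as in your convexity remark. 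Finally, Lemma~\ref{hnd} shows via~\eqref{D1} that every vertex of $D$ has the same outside neighborhood $W$, and that $|W|$ must be either $1$ (giving $H'_{n,d}$) or $d$ with $D$ independent (giving $H_{n,d}$): any intermediate size would yield a hamiltonian cycle of $G$ by applying P\'osa's Theorem~\ref{Posa2} to get a hamiltonian cycle of $G[D\cup W]$ through an edge $w_1w_2$ of $W$ and splicing in a hamiltonian $w_1,w_2$-path of the remaining clique. If you want to complete your outline, what you must actually supply is the content of Lemmas~\ref{g-d} and~\ref{hnd}; the rotation/disintegration machinery you gesture at is not needed and, in this form, not a proof.
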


This is a stability result in the sense that for $d<n/6$, 
each $2$-connected, nonhamilitonian $n$-vertex graph with minimum degree at least $d$
and ``close'' to $h(n,d)$ edges is a subgraph of the extremal graph $H_{n,d}$.
Note that $h(n,d) - h(n, d+1) = n - 3d - 2$ is at least $n/2$ for $d< d_0-1$. Note also that $e(H'_{n,d}) > e(n,d+1)$ only when $d = O(\sqrt{n})$. 

We will use the following well-known theorems of P\'osa.

\begin{thm}[P\'osa~\cite{Posa}]\label{Posa} Let $n\geq 3$.
If $G$ is a nonhamiltonian $n$-vertex graph, then there exists  $1\leq k \leq \left \lfloor \frac{n-1}{2} \right \rfloor$
such that $G$ has a set of $k$ vertices with degree at most $k$. \end{thm}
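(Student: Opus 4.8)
The plan is to prove the statement directly: given a nonhamiltonian $G$ on $n\ge 3$ vertices, I will produce an integer $k$ with $1\le k\le\lfloor (n-1)/2\rfloor$ together with a set of exactly $k$ vertices each of degree at most $k$. (Equivalently, this is the contrapositive of P\'osa's sufficient condition for Hamiltonicity.) The engine is the standard edge-maximal closure combined with an Ore-type exchange along a Hamiltonian path, applied at an \emph{extremal} nonadjacent pair so as to control not just one degree but a whole block of them.

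First I would pass to a closure. Let $\bar G$ be an edge-maximal nonhamiltonian supergraph of $G$ on the same vertex set, obtained by adding missing edges one at a time as long as the graph stays nonhamiltonian. Since $n\ge 3$ and $K_n$ is Hamiltonian, $\bar G\ne K_n$, so $\bar G$ has a nonadjacent pair; also $\bar G$ has no isolated vertex. The key feature of the closure is that for every nonadjacent pair $\{u,w\}$ the graph $\bar G+uw$ is Hamiltonian, hence $\bar G$ contains a Hamiltonian $u$--$w$ path. Because edges were only added, $d_G(v)\le d_{\bar G}(v)$ for every $v$; consequently any vertex of small $\bar G$-degree also has small $G$-degree, so it suffices to locate the witness set inside $\bar G$.

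Among all nonadjacent pairs of $\bar G$ I would choose $\{u,w\}$ maximizing $d_{\bar G}(u)+d_{\bar G}(w)$, with $d_{\bar G}(u)\le d_{\bar G}(w)$, and set $k:=d_{\bar G}(u)$. Fix a Hamiltonian $u$--$w$ path $u=x_1,x_2,\dots,x_n=w$. The Ore exchange shows that for every index $i$ with $u\sim x_i$ one cannot also have $w\sim x_{i-1}$, since otherwise $x_1x_2\cdots x_{i-1}x_nx_{n-1}\cdots x_i x_1$ would be a Hamiltonian cycle. Thus the $k$ distinct predecessors $P:=\{x_{i-1}:u\sim x_i\}$ all lie in $\{x_1,\dots,x_{n-1}\}$ and are nonadjacent to $w$. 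This yields two things at once. Summing, $|P|+|N(w)|\le n-1$, so $d(u)+d(w)\le n-1$ and hence $k\le\lfloor (n-1)/2\rfloor$. And for each $x_{i-1}\in P$ the pair $\{x_{i-1},w\}$ is nonadjacent, so the maximality of the chosen degree sum forces $d(x_{i-1})\le d(u)=k$. Since $u=x_1\in P$ (because $u\sim x_2$) and $|P|=d(u)=k$, the set $P$ consists of exactly $k$ vertices, each of degree at most $k$ in $\bar G$, and therefore in $G$. As $\bar G$ has no isolated vertex we have $k\ge 1$, so $1\le k\le\lfloor (n-1)/2\rfloor$, giving the desired conclusion.

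I expect the main obstacle to be the joint exploitation of the two properties of the chosen pair. The Ore exchange is what forces $P$ to avoid $N(w)$ and simultaneously bounds $k$, but it is the \emph{extremal} choice of $\{u,w\}$ (maximum degree sum among nonadjacent pairs) that upgrades ``$x_{i-1}\not\sim w$'' into the uniform degree bound $d(x_{i-1})\le k$. Extracting both from a single pair, and handling the boundary bookkeeping---that the predecessors are distinct, that they lie in $\{x_1,\dots,x_{n-1}\}$ so none equals $w$, that $u=x_1$ already belongs to $P$, and that $k\ge 1$ so the index lands in the admissible range---is the delicate part; the remaining inequalities are routine. One could instead attempt P\'osa's original rotation--extension argument on a longest path, but there the set of reachable endpoints can exceed $\lfloor (n-1)/2\rfloor$ in size, so the correct witness set must be teased out with extra care; for that reason I would favor the closure argument above.
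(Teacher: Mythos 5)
Your proof is correct. A point of order first: the paper only quotes Theorem~\ref{Posa} from P\'osa's paper with a citation and contains no proof of it, so there is no in-paper argument to match; what can be compared is how your route relates to the machinery the paper does develop in Section~2, and the relationship is close. Your two pillars are exactly the paper's tools: your edge-maximal nonhamiltonian supergraph $\bar G$ is what the paper calls a \emph{saturated} graph, and your inequality $d(u)+d(w)\le n-1$ for nonadjacent pairs is the paper's property~\eqref{D1} --- indeed your Ore exchange along the Hamiltonian $u$--$w$ path is precisely the proof of \eqref{D1}. Moreover, your extremal device --- upgrading ``$x_{i-1}\not\sim w$'' to the uniform bound $d(x_{i-1})\le d(u)$ by choosing a nonadjacent pair of maximum degree sum --- is the same move the paper makes inside its proof of Lemma~\ref{g-d}, where $x$ is chosen of maximum degree among endpoints of nonedges and \eqref{D1} then bounds the degree of every vertex of $D'=V(G)-N(x)-\{x\}$ by $k'=n-1-d(x)$. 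In fact, once \eqref{D1} is in hand you could bypass the predecessor set entirely: take $x$ of maximum degree among endpoints of nonedges and set $k^*:=\min\{d(x),\,n-1-d(x)\}$; every $z\in D'$ satisfies $d(z)\le d(x)$ by the maximality of $d(x)$ and $d(z)\le n-1-d(x)$ by \eqref{D1}, so any $k^*$ vertices of $D'$ form a witness set, and $1\le k^*\le\left\lfloor \frac{n-1}{2}\right\rfloor$ because $\bar G$ is neither complete nor has an isolated vertex. Your version is the classical closure proof of P\'osa's theorem and buys slightly more structure (the set $P$ has exactly $d(u)$ vertices, pinned to the path), at the cost of the boundary bookkeeping, which you handled correctly: $N_{\bar G}(u)\subseteq\{x_2,\dots,x_{n-1}\}$ since $u\not\sim w$, so $P\subseteq\{x_1,\dots,x_{n-2}\}$ avoids $w$; $|P|=d(u)$ by injectivity of $x_i\mapsto x_{i-1}$; $2k\le d(u)+d(w)\le n-1$ gives the range of $k$; and $k\ge 1$ from $u\sim x_2$. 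I see no gap.
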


\begin{thm}[P\'osa~\cite{Po}]\label{Posa2} Let $n\geq 3$, $1\leq \ell <n$ and let $G$ be an $n$-vertex graph such that\\
$d(u)+d(v)\geq n+\ell\;$ for every non-edge $uv$ in $G$.
Then for every linear forest $F$ with $\ell$ edges contained in $G$, the graph $G$
has a hamiltonian cycle containing all edges of $F$.
\end{thm}

\section{Proof of Theorem~\ref{ma}}

Call a graph $G$  {\it saturated} if $G$ is nonhamiltonian but for each $uv \notin E(G)$, $G + uv$ has a hamiltonian cycle. 
Ore's proof \cite{Ore} of Dirac's Theorem~\cite{Dirac} yields that
\begin{equation}\label{D1}
\mbox{\em  for every $n$-vertex saturated graph $G$ and for each $uv \notin E(G)$, $d(u) + d(v) \leq n-1$.}
\end{equation}

First we show  two facts on saturated  graphs with many edges. 
 
\begin{lem}\label{g-d}Let $G$ be a saturated $n$-vertex  graph with $e(G) > h(n, \left \lfloor \frac{n-1}{2} \right \rfloor)$. 
Then for some $1\leq k \leq \left\lfloor \frac{n-1}{2} \right\rfloor$, $V(G)$ contains a subset $D$ of $k$ vertices of degree at most $k$ 
such that $G - D$ is a complete graph. 
\end{lem}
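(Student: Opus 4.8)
The plan is to set $m:=\lfloor\frac{n-1}{2}\rfloor$ and to work throughout with the set $D:=\{v\in V(G): d(v)\le m\}$ of low-degree vertices, together with $R:=V(G)\setminus D$ and $k:=|D|$. I will verify directly that this particular $D$ has the three required properties: $R$ induces a complete graph, $1\le k\le m$, and every vertex of $D$ has degree at most $k$. Nothing about which set to take is left to guesswork; the whole argument is about showing that the natural low-degree set already works.

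First I would establish the clique structure, which is where saturation enters through \eqref{D1}. Any two vertices $x,y\in R$ satisfy $d(x),d(y)\ge m+1$, so $d(x)+d(y)\ge 2m+2\ge n$ (using $2m\ge n-2$); since $n>n-1$, property \eqref{D1} forbids $xy$ from being a non-edge, so $x\sim y$. Hence $R$ is a clique and $G-D$ is complete. Moreover $D\ne\emptyset$, for otherwise $R=V(G)$ would force $G=K_n$, which is hamiltonian for $n\ge 3$; thus $k\ge 1$.

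Next I would bound $k$ using the hypothesis $e(G)>h(n,m)$. Counting the $\binom{n-k}{2}$ edges inside the clique $R$ and bounding the edges meeting $D$ by $\sum_{v\in D}d(v)\le km$ gives $e(G)\le\binom{n-k}{2}+km=:g(k)$. A one-line computation gives $g(k)-g(k-1)=k+m-n$, so $g$ is nonincreasing on $[m,n-m]$ and $g(m)=h(n,m)$; therefore $m\le k\le n-m$ would force $e(G)\le h(n,m)$, a contradiction. The complementary range $k>n-m$ (that is, $|R|=n-k<m$) needs a sharper count: there each vertex of $D$ has at most $|R|$ neighbours in $R$, so $e(D,R)\le k|R|$, and combining this with $2e(G[D])+e(D,R)\le km$ yields $e(G)\le\binom{|R|}{2}+\frac{k(m+|R|)}{2}$, which one checks is again at most $h(n,m)$. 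Both cases contradict the hypothesis, so $k<m\le\lfloor\frac{n-1}{2}\rfloor$.

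Finally I would pin down the degrees inside $D$. Suppose some $v\in D$ had $d(v)>k$. If $v$ were non-adjacent to some $w\in R$, then, as $w$ lies in the clique $R$, we have $d(w)\ge|R|-1=n-k-1$, and \eqref{D1} would give $d(v)\le n-1-d(w)\le k$, contradicting $d(v)>k$. Hence $v$ is adjacent to all of $R$, so $d(v)\ge|R|=n-k>m$ (using $k<m<n-m$), contradicting $v\in D$. Thus every vertex of $D$ has degree at most $k$, and $D$ and $k$ have all the required properties. I expect the main obstacle to be the edge-counting step bounding $k$: the natural bound $g(k)$ is convex in $k$ and exceeds $h(n,m)$ when $k$ is close to $n$, so the argument cannot lean on it alone and must be supplemented by the sharper estimate in the regime where the clique $R$ is small. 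Everything else follows cleanly from \eqref{D1}.
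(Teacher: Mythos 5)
Your proof is correct, and it takes a genuinely different route from the paper's. The paper's proof is built on P\'osa's theorem (Theorem~\ref{Posa}): it takes the \emph{largest} $k\le \left\lfloor \frac{n-1}{2}\right\rfloor$ for which $G$ has $k$ vertices of degree at most $k$, lets $D$ be the set of those vertices, and then the whole difficulty is proving that $G-D$ is complete. That step is a contradiction argument: given a non-edge $xy$ in $G-D$ with $d(x)$ maximal, \eqref{D1} makes $D'=V(G)-N(x)-\{x\}$ a set of $k'=n-1-d(x)$ vertices of degree at most $k'$; maximality of $k$ forces $k'>\left\lfloor \frac{n-1}{2}\right\rfloor$, so every vertex of $D'\cup\{x\}$ has degree at most $\left\lfloor \frac{n-1}{2}\right\rfloor$ and $e(G)\le h(n,\left\lfloor \frac{n-1}{2}\right\rfloor)$. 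You bypass P\'osa's theorem entirely: taking $D$ to be the vertices of degree at most $m=\left\lfloor \frac{n-1}{2}\right\rfloor$ makes completeness of $G-D$ immediate from \eqref{D1}, and the work shifts to the edge count proving $k=|D|<m$, plus one further application of \eqref{D1} to pull the degrees in $D$ down from $m$ to $k$. Your route buys self-containedness --- the only nonhamiltonicity inputs are \eqref{D1} and the hamiltonicity of $K_n$, whereas the paper also needs Theorem~\ref{Posa} --- at the price of the two-case counting argument, whose deferred verification does check out: writing $r=|R|=n-k\le m-1$ in your second case, the bound reads $2e(G)\le r(r-1)+(n-r)(m+r)=nm+r(n-1-m)$, which is nondecreasing in $r$, and at $r=m-1$ one computes $2h(n,m)-\bigl(nm+(m-1)(n-1-m)\bigr)=(n-2m)^2+m-1\ge 1$, so $e(G)<h(n,m)$ in that regime as well. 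With that, your case split ($k<m$, then $m\le k\le n-m$, then $k>n-m$) is exhaustive, and your final degree argument correctly uses $k<m<n-m$; the proof is complete.
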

\begin{proof} Since $G$ is nonhamiltonian,  by Theorem~\ref{Posa}, there exists some $1\leq k \leq \left\lfloor \frac{n-1}{2} \right\rfloor$ such that $G$ has 
$k$ vertices with degree at most $k$. Pick the maximum such $k$, and let $D$ be the set of the vertices with degree at most $k$. 
Since $e(G) > h(n, \left \lfloor \frac{n-1}{2} \right \rfloor)$, $\; k<  \left \lfloor \frac{n-1}{2} \right \rfloor$. So, by the maximality of $k$,
$|D|=k$.

Suppose there exist $x, y \in V(G) - D$ such that $xy \notin E(G)$. Among all such pairs, choose $x$ and $y$ with the maximum $d(x)$.
Since $y\notin D$, $d(y)>k$. Let $D':=V(G)-N(x)-\{x\}$ and $k':=|D'|=n-1-d(x)$.
By~\eqref{D1}, 
\begin{equation}\label{818}
 \mbox{ \em $d(z) \leq n - 1 - d(x)=k'\;$ for all $\;z\in D'$. 
}
\end{equation}
So $D'$ is a set of $k'$ vertices of degree at most $k'$.
Since $y\in D'$, $\, k' \geq d(y) > k$. Thus by the maximality of $k$, we get
  $k' = n-1-d(x)> \left\lfloor \frac{n-1}{2}\right\rfloor$. 
Equivalently, $d(x) < \lceil \frac{n-1}{2}\rceil$. 
For all $z \in D'+\{x\}$, either $z \in D$ where $d(z)\leq k \leq \left\lfloor\frac{n-1}{2}\right\rfloor$, or $z \in V(G) - D$, 
and so $d(z) \leq d(x)\leq \left\lfloor \frac{n-1}{2}\right\rfloor$. 
It follows that $e(G) \leq h(n, \left\lfloor \frac{n-1}{2}\right\rfloor)$, a contradiction. 
\end{proof}

\begin{lem}\label{hnd} Under the conditions of Lemma \ref{g-d}, if $k = \delta(G)$, then $G = H_{n,\delta(G)}$ or $G = H'_{n,\delta(G)}$. 
\end{lem}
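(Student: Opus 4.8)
The plan is to first pin down the adjacency structure between $D$ and $A:=V(G)-D$, and then characterize exactly which graphs with that structure fail to be hamiltonian. We are under the hypotheses of Lemma~\ref{g-d} with $k=\delta(G)=d$, so $D$ has exactly $d$ vertices, each of \emph{exact} degree $d$ (they have degree at most $k=d$ and at least $\delta(G)=d$), and $A=G-D$ is the complete graph $K_{n-d}$.

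First I would prove the key dichotomy: every vertex of $A$ is adjacent either to all of $D$ or to none of $D$. Indeed, a vertex $a\in A$ is adjacent to all $n-d-1$ other vertices of $A$, so $d(a)\ge (n-d-1)+|N(a)\cap D|$; if $a$ had both a neighbor in $D$ and a non-neighbor $v\in D$, then $d(a)\ge n-d$ while $d(v)=d$, so $d(a)+d(v)\ge n$, contradicting~\eqref{D1} for the non-edge $av$ (this is where saturation enters). Let $S$ be the set of vertices of $A$ adjacent to all of $D$ and put $s:=|S|$. Then $N(v)\cap A=S$ for every $v\in D$, so $G[D]$ is $(d-s)$-regular; moreover $s\ge 1$ (else some $v\in D$ would have degree $\le d-1$) and $A-S\ne\emptyset$ (since $s\le d<n-d$).

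Next I would set up a clean hamiltonicity criterion. Because $A$ is complete and the vertices of $A-S$ have no neighbor in $D$, a hamiltonian cycle must traverse $D$ in \emph{segments}, each a path of $G[D]$ flanked by two vertices of $S$, with all of $A-S$ occupying one further gap between two vertices of $S$. Conversely, if $G[D]$ can be partitioned into at most $s-1$ vertex-disjoint paths, I can thread these segments through distinct vertices of $S$ and place $A-S$ in the remaining gap to produce a hamiltonian cycle. So it suffices to bound the path-cover number of $G[D]$: I claim that for $2\le s\le d-1$ every $(d-s)$-regular graph on $d$ vertices is covered by at most $s-1$ disjoint paths, making $G$ hamiltonian --- a contradiction. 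This leaves only $s=d$ (so $D$ is edgeless and joined to a common $d$-set $S$, i.e.\ $G=H_{n,d}$) and $s=1$ (so $D=K_d$ is joined to a single vertex $w\in A$, whence $D\cup\{w\}=K_{d+1}$ and $G=H'_{n,d}$).

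The main work is this path-cover bound, which I would handle by a longest-path argument. Writing $k:=d-s\ge 1$, it is enough to find a linear forest in $G[D]$ with at least $k+1$ edges (the remaining vertices are added as trivial paths, giving at most $d-(k+1)=s-1$ paths in total). Take a longest path $P=v_0\cdots v_\ell$ in $G[D]$; maximality forces $N(v_0)\subseteq\{v_1,\dots,v_\ell\}$, so $\ell\ge k$. If $\ell\ge k+1$ we are done; if $\ell=k$, then $v_0$ is adjacent to all of $v_1,\dots,v_k$, so $v_0\cdots v_k v_0$ is a cycle $C$ on $k+1$ vertices. Since $d\ge k+2$, either some vertex outside $C$ in its component attaches to $C$, yielding a path with $k+1$ edges, or the component of $C$ is exactly $K_{k+1}$ and a second component contributes a disjoint path of at least $k$ further edges, for a total of at least $2k\ge k+1$. \emph{I expect this longest-path case analysis to be the main obstacle}: the dichotomy and the threading construction are short once set up, but the delicate point is the disconnected regime (small $k$), where the many components of $G[D]$ must be offset by the correspondingly large number $s$ of available connectors in $S$.
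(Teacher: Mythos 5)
Your proof is correct, and while its first phase coincides with the paper's, the way you kill the intermediate case is genuinely different. The structural part is the same argument: your dichotomy for vertices of $A$ is exactly how the paper uses saturation via~\eqref{D1} to show that the set $W$ of vertices of $V(G)-D$ with a neighbor in $D$ is a common neighborhood of all of $D$ (your $S$ is the paper's $W$), and your surviving cases $s=d$ and $s=1$ are the paper's cases $|V(G')|=2d$ and $|W|=1$, giving $H_{n,d}$ and $H'_{n,d}$. The divergence is in the range $2\le s\le d-1$. The paper works inside $G':=G[D\cup W]$: every vertex of $G'$ has degree at least $d$, and $|V(G')|\le 2d-1$, so Theorem~\ref{Posa2} with $\ell=1$ produces a hamiltonian cycle of $G'$ through a prescribed edge $w_1w_2$ of $W$, which is then spliced with a hamiltonian $w_1,w_2$-path of the complete graph on the remaining vertices. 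You instead work inside $G[D]$: you exploit its $(d-s)$-regularity to extract, by a longest-path argument, a spanning linear forest with at least $d-s+1$ edges (equivalently a cover by at most $s-1$ paths), and then thread those paths through distinct vertices of $S$, parking $A-S$ in the last gap. Your case analysis for the path-cover bound is sound: either the longest path already has $d-s+1$ edges, or its vertex set spans a $K_{k+1}$ component (with $k:=d-s$), and a second component — which exists since $d\ge k+2$ — supplies a further path of at least $k$ edges, giving $2k\ge k+1$ in total. What each approach buys: yours is self-contained and removes any need for Theorem~\ref{Posa2} (whose only use in the paper is precisely this step), at the price of proving the path-cover lemma by hand; the paper's is shorter by taking P\'osa's theorem as a black box. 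One remark: the ``necessity'' half of your hamiltonicity criterion (how a hamiltonian cycle must traverse $D$) is never used — only the threading construction enters the argument — so it could be dropped without loss.
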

\begin{proof}Set $d:=\delta(G)$, and let $D$ be a set of $d$ vertices with degree at most $d$. 
Let $u\in D$. Since $\delta(G)\geq |D|=d$, $u$   has a neighbor $w\in V(G)-D$.
 Consider any  $v\in D - \{u\}$. 
By Lemma \ref{g-d}, $w$ is adjacent to all of $V(G) - D - \{w\}$. It also is adjacent  to $u$, therefore its degree is at least $n-d$.
We obtain 
    \[d(w) + d(v) \geq (n-d) + d = n.\] 
Then by~\eqref{D1}, $w$ is adjacent to $v$, and hence $w$ is adjacent to all vertices of $D$.

Let $W$ be the set of vertices in $V(G)-D$ having a neighbor in $D$. We have obtained that $W\neq \emptyset$ and 
\begin{equation}\label{D3}
\mbox{\em
 $N(u)\cap (V(G)-D)=W$ for all $u\in D$.}
\end{equation}
Let $G' = G[D \cup W]$. If $|W|=1$, then $G=H'_{n,d}$. 
If $|V(G')| = 2d$, then by~\eqref{D3}, each vertex $u \in D$ has the same $d$ neighbors in $V(G) -D$. 
Because $d(u) = d$, $D$ is an independent set. Thus $G=H_{n,d}$. Otherwise,  $d+2\leq |V(G')| \leq 2d - 1$, $|D|\geq 2$.

Fix a pair of vertices $w_1,w_2 \in W$. 
For any $x,y \in V(G')$,
    \[d(x) + d(y) \geq d + d \geq |V(G')| +1.\]
Therefore by Theorem~\ref{Posa2}, $G'$ has a hamiltonian cycle $C$ that uses the edge $w_1w_2$. 
Since $G'':=G-(V(G')-\{w_1,w_2\})$ is a complete graph, it contains a hamiltonian $w_1,w_2$-path $P$.  
Then $P \cup (C - w_1w_2)$ is a hamiltonian cycle of $G$, a contradiction. \end{proof}



{\it Proof of Theorem \ref{ma}.} Suppose that an $n$-vertex, nonhamiltonian graph $G$ satisfies the constraints of 
 Theorem~\ref{ma} for some $1\leq d \leq \left \lfloor \frac{n-1}{2} \right \rfloor$.  
We may assume $G$ is saturated, since if a graph containing $G$ is a subgraph of $H_{n,d}$ or $H'_{n,d}$, then $G$ is as well.



By Lemma \ref{g-d}, $G$ has a set $D$ of $k \leq \left \lfloor \frac{n-1}{2} \right \rfloor$ vertices with degree at most $k$ such that $G-D$ is a complete graph. Therefore $e(G) \leq {n-k \choose 2} + k^2 = h(n,k)$. If $k > d$, then $e(G) \leq \max\{ h(n,d+1), h(n, \left\lfloor \frac{n-1}{2}\right\rfloor)\}=e(n,d+1)$, a contradiction. Thus $k\leq d$. 
Furthermore, $k\geq \delta(G)\geq d$, and hence $k = d$. 
Also, since $e(G) >  h(n, \left\lfloor \frac{n-1}{2}\right\rfloor))$, we have $d+1 \leq d_0(n)\leq (n+8)/6$. Applying Lemma \ref{hnd} completes the proof.
\qed

\vspace{5mm}

{\bf Acknowledgment.} We thank both referees for their helpful comments.

{\bf Acknowledgment added on April 5, 2017.}
We have learned that Theorem~\ref{ma} has already been proved by Li and Ning as a lemma in~\cite{lining} with a somewhat
 different proof.

\end{document}